\documentclass[preprint,review,10pt]{amsart}
\usepackage{amsmath,amssymb,amsfonts,xspace,mathrsfs}
\newtheorem{theorem}{Theorem}[section]
\newtheorem{lemma}[theorem]{Lemma}

\theoremstyle{definition}
\newtheorem{definition}[theorem]{Definition}
\newtheorem{example}[theorem]{Example}

\newtheorem{proposition}[theorem]{Proposition}

\theoremstyle{remark}
\newtheorem{remark}[theorem]{Remark}

\numberwithin{equation}{section}

\begin{document}

\title{$\mathcal{Q}$-filters of bounded lattices}

\author{Mahdi Anbarloei}
\address{Department of Mathematics, Faculty of Sciences,
Imam Khomeini International University, Qazvin, Iran.
}

\email{m.anbarloei@sci.ikiu.ac.ir}


\subjclass[2020]{ 97H50, 06A11  }


\keywords{  $\mathcal{Q}$-filter, $S$-$\mathcal{Q}$-filter, almost $S$-$\mathcal{Q}$-filter}

\begin{abstract}
 In this paper, we introduce and study the notion of  $\mathcal{Q}$-filters in  bounded lattices. 

\end{abstract}
\maketitle

\section{Introduction} 
Throughout this paper, we assume that all lattices are bounded, i.e., they have a least element $0$ and a greatest element $1$.  By treating lattices as generalizations of rings, it is natural to examine which ring properties hold in lattice theory. Since the lack of subtraction prevents many ring results from having direct analogues, overcoming this conceptual limitation remains a highly pursued goal in the literature.

As cornerstone concepts in abstract algebra, prime ideals continue to motivate numerous studies focused on expanding and generalizing their properties. In \cite{Mimouni}, Mimouni introduced the notion of $Q$-ideals in a commutative ring unifying  and some ideals, such as prime ideals, primary ideals, $n$-ideals and $J$-ideals in a single framework. Let $A \subseteq Q$ be ideals of a commutative ring $R$. Then, $A$ is called a $Q$-ideal if for every $a,b \in R$, $ab \in A$ and $a \notin Q$ implies that $b \in A$. Later, Smach extended this concept to  $S$-$Q$-ideals in commutative rings via a multiplicative subset in \cite{Smach}. Furthermore, he defined the notion of almost $S$-$Q$-ideals in the same paper. 

 To date, different types of filters have been created to allow a complete understanding of general lattice structures. Recently, the notion of $J$-filter in a bounded lattice was introduced by Atani in \cite{Atani6}. Our aim in this paper is to introduce and
study the notion of $Q$-filters of bounded lattices, 
thereby establishing   a unified framework for prime filters and $J$-filters. Next, we propose a generalization of  this notion with the help of a $\vee$-closed subset $S$, called  $S$-$Q$-filters. Moreover, we present a weak version of $S$-$Q$-filters termed almost $S$-$Q$-filters.

 Among the diverse results established in this work, we show that the filter $\{1\}$ is a $Q$-filter of a lattice $\mathcal{L}$ if and only if $Q \neq \{1\}$ contains the set of all identity joins of $\mathcal{L}$ in Proposition \ref{1}.   Theorem \ref{4} presents conditions under which a lattice is local. In theorem \ref{5}, we conclude that if $\mathcal{P} \vee \mathcal{Q}$ is a $\mathcal{Q}$-filter of $\mathcal{L}$, then  $ \mathcal{Q}$ contains $\mathcal{P}$ or $\mathcal{Q}$ is a prime filter of $\mathcal{L}$ with $\mathcal{Q}=\mathcal{Q} \vee \mathcal{P}.$ While every $\mathcal{Q}$-filter of $\mathcal{L}$ is an $S$-$\mathcal{Q}$-filter, Example \ref{exasq} shows that the converse is not true in general. Moreover, we examine the behavior of
these notions  under some lattice-theoretic constructions.

\section{Preliminaries}
In this section, we briefly recall some preliminary definitions and basic results  from \cite{Birkhoff} and other research works that will be utilized throughout this paper.

A poset $(\mathcal{L}, \le)$ is a \textit{lattice} if   each pair $u, v \in \mathcal{L}$ has a  greatest lower bound (briefly, g.l.b.) and a   least upper bound (briefly, l.u.b.), denoted by $u \wedge v$ and $u \vee v$, respectively. 
A lattice $\mathcal{L}$ is called \textit{complete} if every subset of $\mathcal{L}$ has a greatest lower bound and least upper bound in $\mathcal{L}$. It follows immediately that every nonvoid complete lattice contains a least element $0$ and a greatest element $1$. In this case, $\mathcal{L}$ is a bounded lattice.
Let $\mathcal{L}$ be a lattice. A non-empty subset  $F$ of $\mathcal{L}$ is said to be a \textit{filter}, if $u \wedge v \in F$ for all $u, v \in F$, and for $u \in F$ and $w \in \mathcal{L}$, $u \le w$ implies $w \in F$. Assume that $\mathcal{L}$ is a lattice with $1$. Then,  $1$ is an element of   every filter of $\mathcal{L}$ and $\{1\}$ is a filter of $\mathcal{L}$. By $\mathcal{F}(\mathcal{L})$, we mean the set of all filters of $\mathcal{L}$. Let $u \in \mathcal{L}$.  A complement of $u$ in $\mathcal{L}$ is an element $v \in \mathcal{L}$ such that $u \vee v = 1$ and $u \wedge v= 0$.   $\mathcal{L}$ is called a complemented lattice if any element of $\mathcal{L}$ has a complement in $\mathcal{L}$.

\begin{definition}
\cite{Birkhoff} Let  $F$ be a proper filter of $\mathcal{L}$. We say that 
\begin{itemize}
 \item[\rm(1)] $F$ is \textit{prime} if $u \vee v \in F$ implies that $u \in F$ or $v \in F$.
\item[\rm(2)]  $F$ is  \textit{maximal} if whenever $H$ is a filter in $\mathcal{L}$ such that $F \subsetneqq H$, then $H = \mathcal{L}$.
\end{itemize}
\end{definition}

\begin{definition} \cite{Atani1}
Let $\mathcal{L}$ be a lattice and $X \subseteq \mathcal{L}$.  The filter generated by $X$, denoted by $\text{T}(A)$,  is defined as 
$$\text{T}(X) = \bigcap \{ F \in  \mathcal{F}(\mathcal{L}) \mid   X \subseteq F \}.$$
\end{definition}

A filter $F$ of a lattice $\mathcal{L}$ is   finitely generated if $F = T(X)$ for some finite subset $X$ of $\mathcal{L}$.
\begin{lemma} \cite{Atani1}
Let $X $ be a non-empty of  $\mathcal{L}$. Then, \[T(X) = \{u \in \mathcal{L} \mid  u_1 \wedge u_2 \wedge \dots \wedge u_n \le u \text{ for some } u_i \in X \; (1 \le i \le n)\}.\]
\end{lemma}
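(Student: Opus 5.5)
We must show that, for non-empty $X$, $T(X)$ coincides with the set
$$G = \{u \in \mathcal{L} \mid u_1 \wedge \dots \wedge u_n \le u \text{ for some } u_i \in X\}.$$
The plan is a double inclusion. First show that $G$ is itself a filter containing $X$; this gives $T(X) \subseteq G$, because $T(X)$ is the intersection of all filters containing $X$. Then show that every filter containing $X$ contains $G$; this gives $G \subseteq T(X)$.

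\textbf{$G$ is a filter containing $X$.} Given $x \in X$, take $n=1$ and $u_1 = x$, so $x \le x$ shows $x \in G$. Hence $X \subseteq G$, and in particular $G$ is non-empty, which is where the hypothesis $X \neq \emptyset$ is used.

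For closure under meets, let $u, v \in G$ with witnesses $u_1 \wedge \dots \wedge u_n \le u$ and $v_1 \wedge \dots \wedge v_m \le v$. Concatenate the two lists to get elements $u_1, \dots, u_n, v_1, \dots, v_m$ of $X$. Their meet lies below both $u$ and $v$, so it lies below $u \wedge v$ by the defining property of the greatest lower bound. Thus $u \wedge v \in G$.

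For upward closure, if $u \in G$ and $u \le w$, then transitivity gives $u_1 \wedge \dots \wedge u_n \le w$, so $w \in G$.

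\textbf{Every filter containing $X$ contains $G$.} Let $F$ be a filter with $X \subseteq F$, and let $u \in G$ with witness $u_1 \wedge \dots \wedge u_n \le u$. Each $u_i$ lies in $F$. By induction on $n$, using closure of $F$ under binary meets, $u_1 \wedge \dots \wedge u_n \in F$. Upward closure of $F$ then gives $u \in F$. Hence $G \subseteq F$ for every such $F$, and so $G \subseteq T(X)$.

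Nothing here is a real obstacle. The only points needing care are to use non-emptiness of $X$ to guarantee $G \neq \emptyset$, and to note that finite meets are well defined and can be regrouped by associativity in a lattice, which justifies both the concatenation step and the induction.
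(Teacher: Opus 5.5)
Your double-inclusion argument is correct and complete. Showing that $G$ is a filter containing $X$ gives $T(X)\subseteq G$. Showing that every filter containing $X$ contains all finite meets of elements of $X$ and is upward closed gives $G\subseteq T(X)$. The paper gives no proof of this lemma; it simply quotes it from the cited reference. Your argument is the standard proof one would expect there.
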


\begin{definition}
\cite{Atani2} Let  $S$ of $\mathcal{L}$ is \textit{$\vee$-closed} if  $s_1 \vee s_2 \in S$ for all $s_1, s_2 \in S$ and $0 \in S$.
\end{definition}
Let $F$ be a prime filter of $\mathcal{L}$. Then, $\mathcal{L} \setminus F$ is a $\vee$-closed subset of $\mathcal{L}$. 
\section{$\mathcal{Q}$-filters}
In this section, we introduce and investigate the  concept of $\mathcal{Q}$-filters in a bounded lattice. We initiate our study with the following definition.
\begin{definition}
Let $\mathcal{P}$ and $ \mathcal{Q}$ be  proper filters of $\mathcal{L}$. We say that $\mathcal{P}$ is a {\it $\mathcal{Q}$-filter} of $\mathcal{L}$  if  for all $u,v \in \mathcal{L}$,   $u \vee v \in \mathcal{P}$ and $u \notin \mathcal{Q}$ imply that   $v \in \mathcal{P}$.
\end{definition}
\begin{example} \label{exa1}
\begin{enumerate}
\item Let $\mathcal{L}=\{0,u,v,w,1\}$ with the ralations $u < v, u \wedge w=0$ and $v \vee w=1$. Set $\mathcal={P}=\{1,a\}$ and $\mathcal{Q}=\{1,u,v\}$. Then, $\mathcal{P}$ is a $\mathcal{Q}$-filter of $\mathcal{L}$.
\item Consider the lattice $\mathcal{L} = \{0, u, v, w, 1\}$   in which  $0 \le u \le w \le 1$, $0 \le v \le w \le 1$, $u \vee v = w$, and $u \wedge v = 0$. In the lattice $\mathcal{L}$, $\mathcal{P}=\{1.w\}$, $\mathcal{Q}_1=\{1,w,u\}$ and $\mathcal{Q}_2=\{1,w,v\}$ are filters.   Although  $u \vee v =w\in \mathcal{P}$ and $v \notin \mathcal{Q}_1$, $u \notin  \mathcal{P}$. This implies that $\mathcal{P}$ is not a $\mathcal{Q}_1$-filter of $\mathcal{L}$. Similarly, one  can  check the filter $\mathcal{P}$  is  not a  $\mathcal{Q}_2$-filter  of $\mathcal{L}$.
 \end{enumerate}
\end{example}
\begin{proposition}
If $\mathcal{P}$ is a {\it $\mathcal{Q}$-filter} of $\mathcal{L}$, then $\mathcal{P} \subseteq  \mathcal{Q}$. 
\end{proposition}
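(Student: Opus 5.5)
The plan is a direct contradiction argument that uses only the definition of a $\mathcal{Q}$-filter together with the fact that $\mathcal{P}$ is proper and contains $1$. Suppose $\mathcal{P}$ is a $\mathcal{Q}$-filter of $\mathcal{L}$ but $\mathcal{P} \not\subseteq \mathcal{Q}$. Then there is some $u \in \mathcal{P}$ with $u \notin \mathcal{Q}$. We want to feed this element into the defining implication so that it forces $\mathcal{P}$ to contain $0$.

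The natural choice is $v = 0$. Since $\mathcal{L}$ is bounded, $u \vee 0 = u$, and so $u \vee 0 \in \mathcal{P}$. Together with $u \notin \mathcal{Q}$, the definition of a $\mathcal{Q}$-filter gives $0 \in \mathcal{P}$.

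Because $\mathcal{P}$ is a filter, it is upward closed. Since $0 \le w$ for every $w \in \mathcal{L}$, this yields $\mathcal{P} = \mathcal{L}$. That contradicts the standing assumption in the definition that $\mathcal{P}$ is a proper filter, so $\mathcal{P} \subseteq \mathcal{Q}$.

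There is no real obstacle here. The only point requiring care is the choice of the second element as $0$, which uses boundedness of $\mathcal{L}$ (assumed throughout the paper). We also need to note that a filter containing $0$ must be all of $\mathcal{L}$; this follows immediately from upward closure.
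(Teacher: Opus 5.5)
Your proof is correct and is essentially the paper's argument: pick $u \in \mathcal{P} \setminus \mathcal{Q}$ and apply the definition to $u = u \vee 0$, which forces $0 \in \mathcal{P}$ and contradicts properness. You also spell out the step the paper leaves implicit, namely that upward closure then gives $\mathcal{P} = \mathcal{L}$.
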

\begin{proof}
On the contrary, assume that $\mathcal{P} \nsubseteq  \mathcal{Q}$. Then, there exists $u \in \mathcal{P}$ such that  $u \notin  \mathcal{Q}$. Since $\mathcal{P}$ is a {\it $\mathcal{Q}$-filter} of $\mathcal{L}$, $u =u \vee 0 \in \mathcal{P}$ and $u \notin  \mathcal{Q}$, we get $0 \in \mathcal{P}$, a contradiction. Therefore, we conclude that $\mathcal{P} \subseteq  \mathcal{Q}$. 
\end{proof}
Recall from \cite{Atani2} that an element $u \in \mathcal{L}$ is  the {\it identity join} of $\mathcal{L}$ if there exists $1 \neq v \in \mathcal{L}$ such that $u \vee v =1$. By $\text{Id}(\mathcal{L})$, we mean the set of all identity joins of $\mathcal{L}$. Moreover, 
a lattice $\mathcal{L}$ containing  $1$ is  an {\it $\mathcal{L}$-domain} if for any $u, v \in \mathcal{L}$, $ u \vee v = 1$ implies $u = 1$ or $ v = 1$.
 Consequently, $\mathcal{L}$ is an $\mathcal{L}$-domain if and only if $\{1\}$ is a prime filter of $\mathcal{L}$.
 \begin{proposition} \label{1}
 Let $\mathcal{Q}$ be a filter of $\mathcal{L}$ such that $\mathcal{Q} \neq \{1\}$. The filter $\{1\}$ is a $\mathcal{Q}$-filter of $\mathcal{L}$ if and only if $\text{Id}(\mathcal{L}) \subseteq \mathcal{Q}$. Furthermore,  if $\mathcal{L}$ is an $\mathcal{L}$-domain, then  $\{1\}$ is a $\mathcal{Q}$-filter of $\mathcal{L}$ for every filter $\mathcal{Q}$ of $\mathcal{L}$.
 \end{proposition}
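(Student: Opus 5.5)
The plan is to unwind the definition of a $\mathcal{Q}$-filter with $\mathcal{P}=\{1\}$. The condition ``$u \vee v \in \{1\}$'' just says $u \vee v = 1$, and ``$v \in \{1\}$'' just says $v=1$. So $\{1\}$ is a $\mathcal{Q}$-filter exactly when
\[
u \vee v = 1 \ \text{and}\ u \notin \mathcal{Q} \ \Longrightarrow\ v = 1 \qquad (u,v \in \mathcal{L}).
\]
I will compare this reformulation with the definition of $\text{Id}(\mathcal{L})$. Throughout, $\mathcal{Q}$ is taken to be proper, as the definition of $\mathcal{Q}$-filter requires.

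For the forward direction, assume $\{1\}$ is a $\mathcal{Q}$-filter and let $u \in \text{Id}(\mathcal{L})$. Choose $1 \neq v \in \mathcal{L}$ with $u \vee v = 1$. If $u \notin \mathcal{Q}$, the reformulated condition forces $v = 1$, which contradicts the choice of $v$. Hence $u \in \mathcal{Q}$, and so $\text{Id}(\mathcal{L}) \subseteq \mathcal{Q}$.

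For the converse, assume $\text{Id}(\mathcal{L}) \subseteq \mathcal{Q}$, and let $u \vee v = 1$ with $u \notin \mathcal{Q}$. Suppose $v \neq 1$. Then $v$ witnesses that $u$ is an identity join, so $u \in \text{Id}(\mathcal{L}) \subseteq \mathcal{Q}$, a contradiction. Therefore $v = 1 \in \{1\}$. The hypothesis $\mathcal{Q} \neq \{1\}$ plays no real role in either direction. As a side remark, $1 \in \text{Id}(\mathcal{L})$ via $1 \vee 0 = 1$, which is consistent with $1 \in \mathcal{Q}$.

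For the last assertion, let $\mathcal{L}$ be an $\mathcal{L}$-domain and $\mathcal{Q}$ any proper filter. If $u \vee v = 1$ and $u \notin \mathcal{Q}$, then $u \neq 1$ because $1 \in \mathcal{Q}$. The $\mathcal{L}$-domain property then gives $v = 1$, so $\{1\}$ is a $\mathcal{Q}$-filter. Alternatively, in an $\mathcal{L}$-domain one has $\text{Id}(\mathcal{L}) = \{1\} \subseteq \mathcal{Q}$, and the first part applies. There is no genuine obstacle here; the only point needing care is to use the witness $v \neq 1$ correctly when matching the definition of identity join against the $\mathcal{Q}$-filter condition.
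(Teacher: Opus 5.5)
Your proof is correct and follows the paper's argument essentially verbatim: both directions match the condition $u\vee v=1,\ u\notin\mathcal{Q}\Rightarrow v=1$ against the witness $v\neq 1$ in the definition of $\text{Id}(\mathcal{L})$, and the last part applies the $\mathcal{L}$-domain property directly (the paper only swaps the roles of $u$ and $v$ using commutativity of $\vee$). Your side remarks are also right: the hypothesis $\mathcal{Q}\neq\{1\}$ is never used, whereas properness of $\mathcal{Q}$, which the paper leaves implicit, is genuinely needed for the converse.
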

 \begin{proof}
$\Longrightarrow$ Let   $v \in \text{Id}(\mathcal{L})$ but $v \notin  \mathcal{Q}$. Then, there exists $1 \neq u \in \mathcal{L}$ such that $u \vee v =1$. Since  $\{1\}$ is a $\mathcal{Q}$-filter of $\mathcal{L}$ and $v \notin \mathcal{Q}$, we get $u \in \{1\}$, a contradiction. Hence, $\text{Id}(\mathcal{L}) \subseteq \mathcal{Q}$.

$\Longleftarrow$ Let $u \vee v \in \{1\}$ for $u,v \in \mathcal{L}$ and $v \notin \mathcal{Q}$. This implies that $v \notin \text{Id}(\mathcal{L})$. Therefore, we conclude that $u=1$ as $u \vee v =1$. Thus, the filter $\{1\}$ is a $\mathcal{Q}$-filter of $\mathcal{L}$.

Now, we show that $``\text{furthermore}"$ statement holds. Let $\mathcal{Q}$ be an arbitrary filter of $\mathcal{L}$. Assume that $u \vee v \in \{1\}$ such that $ v \notin \mathcal{Q}$. Since $\mathcal{L}$ is an $\mathcal{L}$-domain  and $v \neq 1$, we get $u=1$. Consequently, $\{1\}$ is a $\mathcal{Q}$-filter of $\mathcal{L}$.
 \end{proof}
 
\begin{theorem} \label{2}
Let $\mathcal{P}$ and $ \mathcal{Q}$ be   filters  of $\mathcal{L}$ such that  $\mathcal{P} \neq \{1\}$.  The following statements are equivalent: 
\begin{itemize} 
\item[\rm(i)]~ $\mathcal{P}$ is a  $\mathcal{Q}$-filter of $\mathcal{L}$;
\item[\rm(ii)]~  For any two filters $\mathcal{I}$ and $\mathcal{J}$  of $\mathcal{L}$, $\mathcal{I} \vee \mathcal{J} \subseteq \mathcal{P}$ and $\mathcal{I} \nsubseteq \mathcal{Q}$ imply that   $\mathcal{J}  \subseteq \mathcal{P} $.
\end{itemize}
\end{theorem}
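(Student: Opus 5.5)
The plan is to prove both implications directly from the definitions, using principal filters $T(\{u\})=\{x\in\mathcal{L}\mid u\le x\}$ (from the description of $T(X)$ in the Lemma above) to pass from elements to filters. Here I read $\mathcal{I}\vee\mathcal{J}$ as the filter generated by $\{i\vee j \mid i\in\mathcal{I},\ j\in\mathcal{J}\}$. The only facts used about this filter are:
\begin{itemize}
\item[(a)] it contains every $i\vee j$ with $i\in\mathcal{I}$ and $j\in\mathcal{J}$;
\item[(b)] it is contained in any filter that contains all such joins.
\end{itemize}
Both facts also hold if $\mathcal{I}\vee\mathcal{J}$ is taken to be the set of joins itself.

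\textbf{(i)$\Rightarrow$(ii).} Suppose $\mathcal{I}\vee\mathcal{J}\subseteq\mathcal{P}$ and $\mathcal{I}\nsubseteq\mathcal{Q}$, and pick $x\in\mathcal{I}\setminus\mathcal{Q}$. For an arbitrary $j\in\mathcal{J}$, fact (a) gives $x\vee j\in\mathcal{I}\vee\mathcal{J}\subseteq\mathcal{P}$. Since $x\notin\mathcal{Q}$, the $\mathcal{Q}$-filter property (i) yields $j\in\mathcal{P}$. As $j$ was arbitrary, $\mathcal{J}\subseteq\mathcal{P}$.

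\textbf{(ii)$\Rightarrow$(i).} Let $u\vee v\in\mathcal{P}$ with $u\notin\mathcal{Q}$. Put $\mathcal{I}=T(\{u\})$ and $\mathcal{J}=T(\{v\})$.
\begin{itemize}
\item[(1)] $\mathcal{I}\nsubseteq\mathcal{Q}$, because $u\in\mathcal{I}$ and $u\notin\mathcal{Q}$.
\item[(2)] For $i\ge u$ and $j\ge v$ we have $i\vee j\ge u\vee v\in\mathcal{P}$. Since $\mathcal{P}$ is upward closed, every such join lies in $\mathcal{P}$.
\item[(3)] $\mathcal{P}$ is a filter containing all these joins, so fact (b) gives $\mathcal{I}\vee\mathcal{J}\subseteq\mathcal{P}$. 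Concretely, $\mathcal{P}$ is closed under finite meets and upward closed, so it contains everything above a finite meet of such joins.
\end{itemize}
Now (ii) gives $\mathcal{J}\subseteq\mathcal{P}$, and in particular $v\in\mathcal{P}$. Hence $\mathcal{P}$ is a $\mathcal{Q}$-filter.

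The only delicate point is step (3): making sure $\mathcal{I}\vee\mathcal{J}\subseteq\mathcal{P}$ holds under the paper's intended meaning of the join of two filters. This is handled by the fact that $\mathcal{P}$ is closed under finite meets and upward closed. The hypothesis $\mathcal{P}\neq\{1\}$ does not appear to be needed in this argument.
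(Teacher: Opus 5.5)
Your proof is correct and follows the paper's argument. For (i)$\Rightarrow$(ii) you fix $x\in\mathcal{I}\setminus\mathcal{Q}$ and join it with an arbitrary $j\in\mathcal{J}$. For (ii)$\Rightarrow$(i) you apply (ii) to $T(\{u\})$ and $T(\{v\})$, and you also justify the inclusion $T(\{u\})\vee T(\{v\})\subseteq\mathcal{P}$, which the paper merely asserts.

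You are right that $\mathcal{P}\neq\{1\}$ is never used. The only tacit hypothesis, in your argument and in the paper's, is that $\mathcal{P}$ and $\mathcal{Q}$ are proper, as the definition of a $\mathcal{Q}$-filter requires; otherwise (ii) holds trivially when $\mathcal{P}=\mathcal{L}$ or $\mathcal{Q}=\mathcal{L}$, while (i) fails.
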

\begin{proof}
(i) $\Longrightarrow$ (ii) Let $\mathcal{P}$ be a  $\mathcal{Q}$-filter of $\mathcal{L}$. Assumee that $\mathcal{I} \vee \mathcal{J} \subseteq \mathcal{P}$ for filters  $\mathcal{I}$ and $\mathcal{J}$  of $\mathcal{L}$ such that $\mathcal{I}  \nsubseteq  \mathcal{Q}$. Then, there exists $u \in \mathcal{I}$ such that $ u \notin  \mathcal{Q}$. Now, take any $v \in \mathcal{J}$.  Since  $\mathcal{P}$ is a  $\mathcal{Q}$-filter of $\mathcal{L}$, $ u \vee v \in \mathcal{I} \vee \mathcal{J} \subseteq \mathcal{P}$ and $u \notin \mathcal{Q}$, we get  $v \in \mathcal{P}$ and so $\mathcal{J}  \subseteq \mathcal{P} $.   Hence, (ii) holds.

(ii) $\Longrightarrow$ (i) Let $u \vee v \in \mathcal{P}$ for $u,v \in \mathcal{L}$ and $u \notin \mathcal{Q}$. Put $\mathcal{I}=T(\{u\})$ and $\mathcal{J}=T(\{v\})$.   Since  $\mathcal{I} \vee \mathcal{J} \subseteq \mathcal{P}$ and $\mathcal{I} \nsubseteq \mathcal{Q}$, we obtain $\mathcal{J} \subseteq \mathcal{P}$ or  by the assumption. This implies that $v \in \mathcal{P}$. Thus, $\mathcal{P}$ is a $\mathcal{Q}$-filter of $\mathcal{L}$.
\end{proof}
Let $J(\mathcal{L})$ is the intersection of all maximal filters of $\mathcal{L}$. A proper ﬁlter $\mathcal{P}$ of $\mathcal{L}$ is called a {\it $J$-ﬁlter} if whenever $u,v \in \mathcal{L}$ with $u \vee v \in \mathcal{P}$ and $v \notin J(\mathcal{L})$, then $u \in \mathcal{P}$ \cite{Atani6}.
\begin{theorem} \label{3}
Let $\mathcal{P} \neq \{1\}$ be   a filter of $\mathcal{L}$.  Then,  $\mathcal{P}$ is a  $J$-filter of $\mathcal{L}$ if and only if  $\mathcal{P}$ is an  $\mathcal{M}$-filter of $\mathcal{L}$ for every maximal filter $\mathcal{M}$ of $\mathcal{L}$.
\end{theorem}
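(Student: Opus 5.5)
The plan is to unwind both definitions directly and use the commutativity of $\vee$ to match the roles of the two variables. The $J$-filter condition reads ``$u \vee v \in \mathcal{P}$ and $v \notin J(\mathcal{L})$ imply $u \in \mathcal{P}$'', while the $\mathcal{M}$-filter condition reads ``$u \vee v \in \mathcal{P}$ and $u \notin \mathcal{M}$ imply $v \in \mathcal{P}$''. Since $u \vee v = v \vee u$, these differ only by the name of the variable that is required to lie outside the relevant set. The whole argument therefore reduces to comparing ``$x \notin J(\mathcal{L})$'' with ``$x \notin \mathcal{M}$ for some maximal $\mathcal{M}$''.

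For $(\Rightarrow)$, assume $\mathcal{P}$ is a $J$-filter, so $\mathcal{P}$ is proper. Fix a maximal filter $\mathcal{M}$, which is proper by definition. Take $u \vee v \in \mathcal{P}$ with $u \notin \mathcal{M}$. Since $J(\mathcal{L}) \subseteq \mathcal{M}$, we get $u \notin J(\mathcal{L})$. Applying the $J$-filter condition to $v \vee u = u \vee v \in \mathcal{P}$ gives $v \in \mathcal{P}$. Hence $\mathcal{P}$ is an $\mathcal{M}$-filter.

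For $(\Leftarrow)$, assume $\mathcal{P}$ is an $\mathcal{M}$-filter for every maximal $\mathcal{M}$; properness of $\mathcal{P}$ is part of that definition. Take $u \vee v \in \mathcal{P}$ with $v \notin J(\mathcal{L})$. By the definition of $J(\mathcal{L})$ as an intersection, there is a maximal filter $\mathcal{M}$ with $v \notin \mathcal{M}$. Rewriting $v \vee u \in \mathcal{P}$ with $v \notin \mathcal{M}$, the $\mathcal{M}$-filter property yields $u \in \mathcal{P}$. So $\mathcal{P}$ is a $J$-filter.

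The only point needing care, which I expect to be the main (mild) obstacle, is the degenerate case with no maximal filters. In that case the right-hand condition is vacuous, and $J(\mathcal{L})$ would be an empty intersection. I would dispose of this by noting that in a bounded lattice a filter is proper exactly when it omits $0$, so Zorn's lemma gives a maximal filter containing $\{1\}$. Maximal filters thus exist, and $J(\mathcal{L})$ is a proper filter contained in each of them, so both directions are meaningful. The hypothesis $\mathcal{P} \neq \{1\}$ should not be needed in this argument.
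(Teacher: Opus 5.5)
Your proof is correct and follows the paper's argument: $J(\mathcal{L}) \subseteq \mathcal{M}$ gives the forward direction, and $v \notin J(\mathcal{L})$ produces a maximal filter omitting $v$ for the converse. The paper leaves implicit two details you make explicit: the use of commutativity of $\vee$ to swap the variables, and Zorn's lemma to guarantee that maximal filters exist. You are also right that the hypothesis $\mathcal{P} \neq \{1\}$ plays no role.
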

\begin{proof}
$\Longrightarrow$ Let $\mathcal{P}$ be a  $J$-filter of $\mathcal{L}$ and $\mathcal{M}$ be an arbitrary filter of $\mathcal{L}$. Then,   we have $J(\mathcal{L}) \subseteq \mathcal{M}$. Assume that $u \vee v \in \mathcal{P}$ such that $ u \notin \mathcal{M}$. Therefore, we get  $v \in  \mathcal{P}$ as $\mathcal{P}$ is a  $J$-filter of $\mathcal{L}$ and $u \notin  J(\mathcal{L})$. Hence, $\mathcal{P}$ is an  $\mathcal{M}$-filter of $\mathcal{L}$.

$\Longleftarrow$ Assume that $\mathcal{P}$ is an  $\mathcal{M}$-filter of $\mathcal{L}$ for every maximal filter $\mathcal{M}$ of $\mathcal{L}$. Let $u \vee v \in \mathcal{P}$ and $u \notin J(\mathcal{L})$. Then, there is at least a maximal  filter, say $\mathcal{M}_0$,  that do not contain $u$.  By the hypothesis, $\mathcal{P}$ is an  $\mathcal{M}_0$-filter of $\mathcal{L}$. This implies that   $v \in \mathcal{P}$ as $u \vee v \in \mathcal{P}$ and $u \notin \mathcal{M}_0$. Thus, $\mathcal{P}$ is a  $J$-filter of $\mathcal{L}$.
\end{proof}

Recall from \cite{Atani6} that a
$\mathcal{L}$ is called a local lattice if it has exactly one maximal ﬁlter  containing all proper ﬁlters. 
 \begin{theorem} \label{4}
 Let  $\mathcal{L}$ is a  lattice. Then, $\mathcal{L}$ is a local lattice if and only if for every $ 0 \neq  a \in \mathcal{L}$, $T(\{a\})$ is an $\mathcal{M}$-filter for each maximal filter $\mathcal{M}$ containing $T(\{a\})$.
 \end{theorem}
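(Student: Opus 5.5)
The plan is to identify $T(\{a\})$ with the principal filter $\{x \in \mathcal{L} \mid a \le x\}$ (by the lemma describing $T(X)$). This filter is proper exactly when $a \neq 0$. I will also use a standard Zorn's lemma fact: in a bounded lattice every proper filter is contained in a maximal filter, since a union of a chain of proper filters is a filter not containing $0$. Consequently $\mathcal{L}$ is local if and only if it has exactly one maximal filter. I would then prove the two directions separately.

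($\Longrightarrow$) Suppose $\mathcal{L}$ is local with unique maximal filter $\mathcal{M}$, and let $0 \neq a \in \mathcal{L}$. Then $T(\{a\})$ is proper, so $T(\{a\}) \subseteq \mathcal{M}$, and $\mathcal{M}$ is the only maximal filter to check. Let $u \vee v \in T(\{a\})$ with $u \notin \mathcal{M}$. If $u \neq 0$, then $T(\{u\})$ would be a proper filter, hence contained in $\mathcal{M}$, which contradicts $u \notin \mathcal{M}$. So $u = 0$, and then $v = u \vee v \in T(\{a\})$. Thus $T(\{a\})$ is an $\mathcal{M}$-filter.

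($\Longleftarrow$) Suppose the condition holds, and assume for contradiction that there are two distinct maximal filters $\mathcal{M}_1 \neq \mathcal{M}_2$. At least one maximal filter exists by the Zorn argument applied to $\{1\}$. By maximality neither contains the other, so we can pick $u \in \mathcal{M}_1 \setminus \mathcal{M}_2$ and $v \in \mathcal{M}_2 \setminus \mathcal{M}_1$. Set $a = u \vee v$. Since filters are upward closed, $a \in \mathcal{M}_1 \cap \mathcal{M}_2$. Also $a \neq 0$, because proper filters do not contain $0$. Hence $T(\{a\}) \subseteq \mathcal{M}_1$, and by hypothesis $T(\{a\})$ is an $\mathcal{M}_1$-filter. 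Now $v \vee u = a \in T(\{a\})$ and $v \notin \mathcal{M}_1$, so $u \in T(\{a\})$, i.e. $u \vee v = a \le u$. Therefore $v \le u$, and since $v \in \mathcal{M}_2$ we get $u \in \mathcal{M}_2$, a contradiction. So there is exactly one maximal filter, and by the Zorn fact it contains all proper filters; that is, $\mathcal{L}$ is local.

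The main obstacle is the converse direction: finding an element $a$ that forces the two maximal filters to collapse. The key idea is to choose $a = u \vee v$ with $u, v$ separating $\mathcal{M}_1$ and $\mathcal{M}_2$, and then apply the $\mathcal{M}_1$-filter property with the roles of $u$ and $v$ in the definition ordered so that the element outside $\mathcal{M}_1$ plays the part of the excluded one.
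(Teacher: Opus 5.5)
Your proof is correct. The converse direction is the paper's argument with the roles of $\mathcal{M}_1$ and $\mathcal{M}_2$ interchanged. The paper applies the hypothesis with $\mathcal{M}_2$ and gets $v\in T(\{u\vee v\})\subseteq\mathcal{M}_1$; you apply it with $\mathcal{M}_1$ and get $u\in T(\{a\})$, hence $u\in\mathcal{M}_2$. You also supply checks the paper skips: $a\neq 0$, $T(\{a\})\subseteq\mathcal{M}_1$, and the Zorn argument showing that ``not local'' really means ``at least two maximal filters.''

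The forward direction is where you genuinely differ. The paper tries to go through maximality of $\mathcal{M}$ together with an external lemma. As printed, however, it concludes $T(\{a\})=\mathcal{L}$ and $0=b\vee a$, both impossible for $a\neq 0$; apparently $a$ and $u$ have been mixed up. Your direct observation is self-contained and is what that step should say: locality puts every nonzero element into $\mathcal{M}$, because $T(\{x\})$ is proper for $x\neq 0$, so $\mathcal{M}=\mathcal{L}\setminus\{0\}$. Hence $u\notin\mathcal{M}$ forces $u=0$ and $v=u\vee v\in T(\{a\})$.

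One small caveat, shared with the paper: both directions tacitly assume $0\neq 1$, since your Zorn step needs $\{1\}$ to be proper. For the one-element lattice the condition holds vacuously but there is no maximal filter, so the statement fails there.
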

 \begin{proof}
 $\Longrightarrow$  Suppose that $\mathcal{L}$ with unique maximal filter $\mathcal{M}$. Take any $ 0 \neq  a \in \mathcal{L}$. Let $ u \vee v \in T(\{a\})$ and $u \notin  \mathcal{M}$. Since $ \mathcal{M} \subsetneqq \mathcal{M} \wedge T(\{a\}$, we get $\mathcal{M} \wedge T(\{a\}= \mathcal{L}$ by maximality of $\mathcal{M}$. By Lemma 3.1 in \cite{Atani7}, we conclude that $T(\{a\}= \mathcal{L}$. Then, there exists $b \in  \mathcal{L}$, $0 =b \vee a$. This implies that $v=v \vee 0=v \vee b \vee a \in T(\{a\}$. Thus, $T(\{a\})$ is an $\mathcal{M}$-filter of $\mathcal{L}$.
 
$\Longleftarrow$  Let for every $ a \in \mathcal{L}$, $T(\{a\})$ be  an $\mathcal{M}$-filter for each maximal filter $\mathcal{M}$ containing $T(\{a\})$. On the contrary, assume that $\mathcal{L}$ is not a local lattice. Then, $\mathcal{L}$ has at least two maximal filters, say $\mathcal{M}_1$ and $\mathcal{M}_2$. Let  $u \in \mathcal{M}_1 \setminus  \mathcal{M}_2$ and $v \in \mathcal{M}_2 \setminus  \mathcal{M}_1$. By the hypothesis,   $T(\{u \vee v\})$ is an $\mathcal{M}_2$-filter of $\mathcal{L}$. Since $u \vee v \in T(\{u \vee v\})$ and $u \notin \mathcal{M}_2$, we get $v \in T(\{u \vee v\}) \subseteq \mathcal{M}_1$, a contradiction. Hence, $\mathcal{L}$ is local.
 \end{proof}
 \begin{theorem} \label{5}
Let $\mathcal{P}$ and $\mathcal{Q}$ be filters of $\mathcal{L}$. If $\mathcal{P} \vee \mathcal{Q}$ is a $\mathcal{Q}$-filter of $\mathcal{L}$, then $\mathcal{P} \subseteq \mathcal{Q}$ or $\mathcal{Q}$ is a prime filter of $\mathcal{L}$ such that  $\mathcal{Q}=\mathcal{Q} \vee \mathcal{P}.$
\end{theorem}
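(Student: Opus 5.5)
The plan is to first identify $\mathcal{P} \vee \mathcal{Q}$ concretely. I read it as the set $\{p \vee q \mid p \in \mathcal{P},\, q \in \mathcal{Q}\}$, the same join of filters used in Theorem \ref{2}. I will show $\mathcal{P} \vee \mathcal{Q} = \mathcal{P} \cap \mathcal{Q}$:
\begin{itemize}
\item For $p \in \mathcal{P}$ and $q \in \mathcal{Q}$, we have $p \le p \vee q$ and $q \le p \vee q$. Both $\mathcal{P}$ and $\mathcal{Q}$ are upward closed, so $p \vee q \in \mathcal{P} \cap \mathcal{Q}$.
\item Conversely, any $x \in \mathcal{P} \cap \mathcal{Q}$ can be written as $x = x \vee x$ with $x \in \mathcal{P}$ and $x \in \mathcal{Q}$.
\end{itemize}
After this identification everything reduces to element-wise manipulations with the $\mathcal{Q}$-filter condition. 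Since $\mathcal{P} \vee \mathcal{Q}$ is assumed to be a $\mathcal{Q}$-filter, the definition also gives that $\mathcal{Q}$ is a proper filter, which is needed for primeness.

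Next, assume $\mathcal{P} \nsubseteq \mathcal{Q}$ and fix $p \in \mathcal{P} \setminus \mathcal{Q}$. I first show $\mathcal{Q} \subseteq \mathcal{P}$. Take any $q \in \mathcal{Q}$. Then $p \vee q \in \mathcal{P} \vee \mathcal{Q}$ and $p \notin \mathcal{Q}$. The $\mathcal{Q}$-filter property of $\mathcal{P} \vee \mathcal{Q}$ yields $q \in \mathcal{P} \vee \mathcal{Q} \subseteq \mathcal{P}$. Hence $\mathcal{Q} \subseteq \mathcal{P}$, so $\mathcal{Q} = \mathcal{Q} \cap \mathcal{P} = \mathcal{Q} \vee \mathcal{P}$ by the first step.

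Finally, I show $\mathcal{Q}$ is prime. Let $u \vee v \in \mathcal{Q}$ with $u \notin \mathcal{Q}$. By the previous step $u \vee v \in \mathcal{Q} = \mathcal{P} \vee \mathcal{Q}$. The $\mathcal{Q}$-filter condition then gives $v \in \mathcal{P} \vee \mathcal{Q} \subseteq \mathcal{Q}$. Together with properness, $\mathcal{Q}$ is a prime filter.

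The only genuine obstacle is the first step, i.e. making sure the join of filters is the element-wise join and hence coincides with the intersection. If instead $\mathcal{P} \vee \mathcal{Q}$ meant the filter generated by these joins, the same inclusions still hold, because $\mathcal{P} \cap \mathcal{Q}$ is already a filter. So the argument goes through either way.
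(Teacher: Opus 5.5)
Your proof is correct and is essentially the paper's argument: pick $p\in\mathcal{P}\setminus\mathcal{Q}$, use $p\vee q$ to force $\mathcal{Q}\subseteq\mathcal{P}\vee\mathcal{Q}\subseteq\mathcal{Q}$, then apply the $\mathcal{Q}$-filter condition to $\mathcal{Q}=\mathcal{P}\vee\mathcal{Q}$ to get primeness. Your identification $\mathcal{P}\vee\mathcal{Q}=\mathcal{P}\cap\mathcal{Q}$ simply makes explicit the inclusions $\mathcal{P}\vee\mathcal{Q}\subseteq\mathcal{Q}$ and $\mathcal{Q}\subseteq\mathcal{P}$, which the paper uses without comment.
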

\begin{proof}
Let $\mathcal{P} \vee \mathcal{Q}$ be a $\mathcal{Q}$-filter of $\mathcal{L}$ such that $\mathcal{P} \nsubseteq \mathcal{Q}$.  Then, there exists  $u \in \mathcal{P}$ such that $u \notin \mathcal{Q}$. Let $v \in \mathcal{Q}$. So, $u \vee v  \in \mathcal{P} \vee \mathcal{Q}$. Since   $\mathcal{P} \vee \mathcal{Q}$ is a $\mathcal{Q}$-filter of $\mathcal{L}$ and $u \notin \mathcal{Q}$, we get $v \in \mathcal{P} \vee \mathcal{Q}$ which means $\mathcal{Q} \subseteq  \mathcal{P} \vee \mathcal{Q} \subseteq  \mathcal{Q}$. Hence, we conclude that $\mathcal{Q}=\mathcal{P} \vee \mathcal{Q}$ and so $\mathcal{Q} \varsubsetneq  \mathcal{P}$. Now, let $a \vee b \in Q$ for $a,b \in \mathcal{L}$ such that $a \notin \mathcal{Q}$. Since $\mathcal{Q}=\mathcal{P} \vee \mathcal{Q}$ is a $\mathcal{Q}$-filter of $\mathcal{L}$, we get   $b  \in  \mathcal{Q}$.  This means that $\mathcal{Q}$ is a prime filter of $\mathcal{L}$.
\end{proof}
Let  $\mathcal{I}$ be  a filter of a lattice ($\mathcal{L}$, $\le$). Let  us define   the  relation $\sim$ on $\mathcal{L}$ as  $u \sim v$ if and only if there exist $a, b\in \mathcal{I}$ satisfying $u \wedge a = v \wedge b$. In this case,  $\sim$ is an equivalence relation on $\mathcal{L}$.  Let  $u \wedge \mathcal{I}$ be the equivalence class of $u$   and let $ \mathcal{L}/ \mathcal{I}$ be the collection of all equivalence classes. Now, consider the partial order $\le_Q$ on $ \mathcal{L}/ \mathcal{I}$ as follows: for each $u \wedge \mathcal{P}, v \wedge \mathcal{I} \in   \mathcal{L}/ \mathcal{I}$,  $u \wedge \mathcal{I} \le_Q v \wedge \mathcal{I}$ if and only if $u \le v$. Then $(\mathcal{L}/ \mathcal{I},\le_Q)$ is a lattice with $(u \wedge \mathcal{I} ) \vee_Q (v \wedge \mathcal{I} ) = (u \vee v) \wedge \mathcal{I}$ and $(u \wedge \mathcal{I} ) \wedge_Q (v \wedge \mathcal{I} ) = (u \wedge v) \wedge \mathcal{I}$ for all elements $u \wedge \mathcal{I}, v \wedge \mathcal{I} \in \mathcal{L}/ \mathcal{I}$. Note that $u \wedge \mathcal{I} = \mathcal{I}$ if and only if $u \in \mathcal{P}$ \cite{Atani2}.  
\begin{theorem}
Let $\mathcal{I}, \mathcal{P}$ and $ \mathcal{Q}$  be    filters of $\mathcal{L}$ such that $\mathcal{I} \subsetneq \mathcal{P} \subsetneq \mathcal{Q}$. Then, $ \mathcal{P}$ is a   $\mathcal{Q}$-filter of $\mathcal{L}$ if and only if  $\mathcal{P}/ \mathcal{I}$ is a  $\mathcal{Q}/ \mathcal{I}$-filter of $\mathcal{L}/ \mathcal{I}$.
\end{theorem}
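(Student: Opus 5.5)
We work directly with the quotient description recalled just before the statement. Write $\mathcal{P}/\mathcal{I}=\{u\wedge\mathcal{I} \mid u\in\mathcal{P}\}$ and $\mathcal{Q}/\mathcal{I}=\{u\wedge\mathcal{I} \mid u\in\mathcal{Q}\}$. The first step is a membership lemma: for a filter $\mathcal{F}\supseteq\mathcal{I}$,
\[
u\wedge\mathcal{I}\in\mathcal{F}/\mathcal{I}\iff u\in\mathcal{F}.
\]
One direction is trivial. For the other, suppose $u\wedge\mathcal{I}=w\wedge\mathcal{I}$ with $w\in\mathcal{F}$. Then $u\wedge a=w\wedge b$ for some $a,b\in\mathcal{I}$. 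Since $w,b\in\mathcal{F}$ (because $\mathcal{I}\subseteq\mathcal{F}$), we get $w\wedge b\in\mathcal{F}$. Hence $u\ge u\wedge a\in\mathcal{F}$, and so $u\in\mathcal{F}$ by upward closure.

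We apply this lemma with $\mathcal{F}=\mathcal{P}$ and $\mathcal{F}=\mathcal{Q}$, which is allowed because $\mathcal{I}\subsetneq\mathcal{P}\subsetneq\mathcal{Q}$. It also shows that $\mathcal{P}/\mathcal{I}$ and $\mathcal{Q}/\mathcal{I}$ are proper filters of $\mathcal{L}/\mathcal{I}$. Upward closure and $\wedge$-closure transfer through the formulas for $\vee_Q,\wedge_Q$. Properness holds because $0\wedge\mathcal{I}\in\mathcal{Q}/\mathcal{I}$ would force $0\in\mathcal{Q}$.

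Next we translate the defining condition. For $(\Rightarrow)$, suppose $(u\wedge\mathcal{I})\vee_Q(v\wedge\mathcal{I})\in\mathcal{P}/\mathcal{I}$ and $u\wedge\mathcal{I}\notin\mathcal{Q}/\mathcal{I}$. The join equals $(u\vee v)\wedge\mathcal{I}$, so the lemma gives $u\vee v\in\mathcal{P}$. The lemma also gives $u\notin\mathcal{Q}$. Since $\mathcal{P}$ is a $\mathcal{Q}$-filter, $v\in\mathcal{P}$, and therefore $v\wedge\mathcal{I}\in\mathcal{P}/\mathcal{I}$.

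For $(\Leftarrow)$, take $u\vee v\in\mathcal{P}$ with $u\notin\mathcal{Q}$. Then $(u\wedge\mathcal{I})\vee_Q(v\wedge\mathcal{I})=(u\vee v)\wedge\mathcal{I}\in\mathcal{P}/\mathcal{I}$. By the lemma, $u\wedge\mathcal{I}\notin\mathcal{Q}/\mathcal{I}$. The hypothesis then gives $v\wedge\mathcal{I}\in\mathcal{P}/\mathcal{I}$, and the lemma again yields $v\in\mathcal{P}$.

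The only genuine obstacle is the membership lemma, that is, the well-definedness of membership in the quotient filters. We must not pass from $u\wedge\mathcal{I}=w\wedge\mathcal{I}$ to $u=w$. The argument works only because $\mathcal{I}$ is contained in both $\mathcal{P}$ and $\mathcal{Q}$, which is exactly where the hypothesis $\mathcal{I}\subsetneq\mathcal{P}\subsetneq\mathcal{Q}$ is used. The rest of the proof is a direct transfer using $(u\wedge\mathcal{I})\vee_Q(v\wedge\mathcal{I})=(u\vee v)\wedge\mathcal{I}$.
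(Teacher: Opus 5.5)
Your proof is correct and follows the paper's argument essentially step for step: in both directions you transfer the $\mathcal{Q}$-filter condition through $(u\wedge\mathcal{I})\vee_Q(v\wedge\mathcal{I})=(u\vee v)\wedge\mathcal{I}$. The only difference is that you explicitly prove $u\wedge\mathcal{I}\in\mathcal{F}/\mathcal{I}\iff u\in\mathcal{F}$ for filters $\mathcal{F}\supseteq\mathcal{I}$, via $u\ge u\wedge a=w\wedge b\in\mathcal{F}$; the paper uses this silently (for instance in passing from $(u\vee v)\wedge\mathcal{I}\in\mathcal{P}/\mathcal{I}$ to $u\vee v\in\mathcal{P}$), so your version is, if anything, more complete.
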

\begin{proof}
$\Longrightarrow$ Let $ \mathcal{P}$ be a   $\mathcal{Q}$-filter of $\mathcal{L}$. Suppose that $(u \vee v) \wedge \mathcal{I}=(u \wedge \mathcal{I}) \vee_Q (v \wedge \mathcal{I}) \in \mathcal{P}/ \mathcal{I}$ such that $u \wedge \mathcal{I} \notin \mathcal{Q}/ \mathcal{I}$. This implies that $u \vee v \in  \mathcal{P}$ with $u \notin \mathcal{Q}$. Hence, we conclude that $v \in \mathcal{P}$ as $ \mathcal{P}$ is  a  $\mathcal{Q}$-filter of $\mathcal{L}$. This means that $v \wedge  \mathcal{I} \in \mathcal{P}/ \mathcal{I}$. Consequently, $\mathcal{P}/ \mathcal{I}$ is a  $\mathcal{Q}/ \mathcal{I}$-filter of $\mathcal{L}/ \mathcal{I}$.

$\Longleftarrow$ Suppose that $\mathcal{P}/ \mathcal{I}$ is a  $\mathcal{Q}/ \mathcal{I}$-filter of $\mathcal{L}/ \mathcal{I}$. Let $u \vee v \in \mathcal{P}$ such that $u \notin \mathcal{Q}$. Therefore we have, $(u \wedge \mathcal{I}) \vee_Q (v \wedge \mathcal{I})=(u \vee v) \wedge \mathcal{I} \in \mathcal{P}/ \mathcal{I}$ and  $u \wedge \mathcal{I} \notin \mathcal{Q}/ \mathcal{I}$. This   implies that $v \wedge \mathcal{I} \in \mathcal{P}/ \mathcal{I}$ as $\mathcal{P}/ \mathcal{I}$ is a  $\mathcal{Q}/ \mathcal{I}$-filter of $\mathcal{L}/ \mathcal{I}$. Hence, we obtain $v \in \mathcal{P}$. Thus, $ \mathcal{P}$ is a   $\mathcal{Q}$-filter of $\mathcal{L}$.
\end{proof}

\section{$S$-$\mathcal{Q}$-filters}
\begin{definition}
Let $S$ be a $\vee$-closed subset of $\mathcal{L}$ and let $\mathcal{P} \subseteq \mathcal{Q}$ be filters of $\mathcal{L}$ with $S \cap \mathcal{Q}=\varnothing$. Then, $\mathcal{P}$ is called an {\it $S$-$\mathcal{Q}$-filter} of  $\mathcal{L}$ if there exists an element $s \in S$ such that for all $u,v \in \mathcal{L}$ if $u \vee v \in \mathcal{P}$, then $s \vee u \in \mathcal{P}$ or $s \vee v \in \mathcal{Q}$. In this case, we say that $\mathcal{P}$ is associated with $s$.
\end{definition}
Although every $\mathcal{Q}$-filter of $\mathcal{L}$ is an $S$-$\mathcal{Q}$-filter, the following example shows that the converse   may not be true, in general.
\begin{example} \label{exasq}
In Example \ref{exa1}(2), $S=\{0,u\}$ is a $\vee$-closed subsete of $\mathcal{L}$ and $\mathcal{P}=\{1,w\}$ is an $S$-$\mathcal{Q}$-filter of $\mathcal{L}$ where $Q=\{1,w,v\}$. However, Example \ref{exa1} verifies that $\mathcal{P}$ is not a $\mathcal{Q}$-filter of $\mathcal{L}$.
\end{example} 
\begin{remark} \label{21}
Assume  that $S$ is a $\vee$-closed subset of $\mathcal{L}$ and  $\mathcal{P} \subseteq \mathcal{Q} \subseteq \mathcal{T}$ are filters of $\mathcal{L}$ such that  $S \cap \mathcal{T}=\varnothing$. If $\mathcal{P}$ is an $S$-$\mathcal{Q}$-filter of $\mathcal{L}$, then $\mathcal{P}$ is an $S$-$\mathcal{T}$-filter.
\end{remark}
\begin{theorem} \label{22}
Let $S$ be a $\vee$-closed subset of $\mathcal{L}$ and  $\mathcal{P} \subseteq \mathcal{Q}$ be filters of $\mathcal{L}$ such that  $S \cap \mathcal{Q}=\varnothing$. Then,  $\mathcal{P}$ is an  $S$-$\mathcal{Q}$-filter of $\mathcal{L}$ if and only if there exists an $s \in S$ such that 
 for any filters $\mathcal{I}$ and $\mathcal{J}$  of $\mathcal{L}$, $\mathcal{I} \vee \mathcal{J} \subseteq \mathcal{P}$ implies  $s \vee \mathcal{I}  \subseteq \mathcal{P} $ or $s \vee \mathcal{J} \subseteq \mathcal{Q}$.

\end{theorem}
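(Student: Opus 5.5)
The plan is to follow the pattern of Theorem \ref{2}, with the same $s\in S$ used in both directions. I read $\mathcal{I}\vee\mathcal{J}$ as the filter formed by the joins $i\vee j$ with $i\in\mathcal{I}$ and $j\in\mathcal{J}$, and $s\vee\mathcal{I}$ as $\{s\vee i \mid i\in\mathcal{I}\}$. Note that $\{i\vee j\}=\mathcal{I}\cap\mathcal{J}$. Indeed, $i\vee j$ lies above $i$ and above $j$, so it is in both filters. Conversely, any $x\in\mathcal{I}\cap\mathcal{J}$ can be written as $x=x\vee x$. I will only use two facts: $i\vee j\in\mathcal{I}\vee\mathcal{J}$, and every element of $\mathcal{I}\vee\mathcal{J}$ lies above some $i\in\mathcal{I}$ and some $j\in\mathcal{J}$.

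For ($\Longrightarrow$), let $\mathcal{P}$ be associated with $s\in S$, and let $\mathcal{I}\vee\mathcal{J}\subseteq\mathcal{P}$. If $s\vee\mathcal{I}\subseteq\mathcal{P}$ there is nothing to prove, so assume not and fix $u\in\mathcal{I}$ with $s\vee u\notin\mathcal{P}$. Take any $v\in\mathcal{J}$. Then $u\vee v\in\mathcal{I}\vee\mathcal{J}\subseteq\mathcal{P}$, so the $S$-$\mathcal{Q}$-filter condition gives $s\vee u\in\mathcal{P}$ or $s\vee v\in\mathcal{Q}$. The first alternative is excluded, so $s\vee v\in\mathcal{Q}$. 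Since $v\in\mathcal{J}$ was arbitrary, $s\vee\mathcal{J}\subseteq\mathcal{Q}$.

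The one point that needs care is uniformity. The elementwise condition only says that for each pair $(u,v)$ one of the two alternatives holds, and this pair-by-pair choice could a priori vary. The argument above removes this problem by fixing a single witness $u$ that fails the first alternative and letting $v$ range over all of $\mathcal{J}$. The same $s$ serves every pair of filters.

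For ($\Longleftarrow$), let $s$ be as in the hypothesis, and let $u\vee v\in\mathcal{P}$. Put $\mathcal{I}=T(\{u\})$ and $\mathcal{J}=T(\{v\})$; by the lemma on $T(X)$ these are the principal up-sets of $u$ and $v$. Every element of $\mathcal{I}\vee\mathcal{J}$ has the form $i\vee j$ with $i\ge u$ and $j\ge v$, so it is at least $u\vee v\in\mathcal{P}$. Because $\mathcal{P}$ is an up-set, this gives $\mathcal{I}\vee\mathcal{J}\subseteq\mathcal{P}$. The hypothesis then yields $s\vee\mathcal{I}\subseteq\mathcal{P}$ or $s\vee\mathcal{J}\subseteq\mathcal{Q}$. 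Since $u\in\mathcal{I}$ and $v\in\mathcal{J}$, this gives $s\vee u\in\mathcal{P}$ or $s\vee v\in\mathcal{Q}$. Hence $\mathcal{P}$ is an $S$-$\mathcal{Q}$-filter associated with $s$; the standing assumptions $\mathcal{P}\subseteq\mathcal{Q}$ and $S\cap\mathcal{Q}=\varnothing$ are part of the statement.
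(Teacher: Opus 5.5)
Your proof is correct and follows the paper's proof essentially step for step. In the forward direction you keep the same associated $s$, fix a witness $u\in\mathcal{I}$ with $s\vee u\notin\mathcal{P}$, and let $v$ range over $\mathcal{J}$. In the converse you use $T(\{u\})$ and $T(\{v\})$. The one addition is that you explicitly justify $T(\{u\})\vee T(\{v\})\subseteq\mathcal{P}$ by reading $\mathcal{I}\vee\mathcal{J}$ as the set of joins. The paper asserts that inclusion without comment, and it does depend on this reading, since under the generated-filter reading it would fail.
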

\begin{proof}
  $\Longrightarrow$   Assume that  $\mathcal{P}$ is an   $S$-$\mathcal{Q}$-filter of $\mathcal{L}$ associated with $s$. Let  $\mathcal{I}$ and $\mathcal{J}$  be filters of $\mathcal{L}$ satisfying $\mathcal{I} \vee \mathcal{J} \subseteq \mathcal{P}$ but $s \vee \mathcal{I}  \nsubseteq \mathcal{P} $.  Then, there exists $u \in \mathcal{I}$ such that $ s \vee u \notin  \mathcal{P}$. Now, take any $v \in \mathcal{J}$.  Since   $ u \vee v \in \mathcal{I} \vee \mathcal{J} \subseteq \mathcal{P}$ and $s \vee u \notin \mathcal{P}$, we have  $s \vee v \in \mathcal{Q}$ which means  $s \vee \mathcal{J}  \subseteq \mathcal{Q} $.

(ii) $\Longrightarrow$ (i) Assume that $u \vee v \in \mathcal{P}$ for $u,v \in \mathcal{L}$. Let us put $\mathcal{I}=T(\{u\})$ and $\mathcal{J}=T(\{v\})$.   From  $\mathcal{I} \vee \mathcal{J} \subseteq \mathcal{P}$ it follows that  $s \vee \mathcal{I} \subseteq \mathcal{P}$ or  $s \vee \mathcal{J} \subseteq \mathcal{Q}$ by the hypothesis. Therefore, we obtain $s \vee u \in \mathcal{P}$ or  $s \vee v \in \mathcal{Q}$. Thus, $\mathcal{P}$ is an  $S$-$\mathcal{Q}$-filter of $\mathcal{L}$.
\end{proof}
Recall from \cite{Atani2} that a filter $\mathcal{P}$ of $\mathcal{L}$ disjoin with the $\vee$-closed subset $S$ is called an $S$-prime filter if there exists an $s \in S$ such that for all $u,v \in \mathcal{L}$ if $u \vee v \in \mathcal{P}$, then $s \vee u \in \mathcal{P}$ or $s \vee v \in \mathcal{P}$.
\begin{theorem}
Let  $\mathcal{P}$ and $ \mathcal{Q}$ be filters of $\mathcal{L}$ and $S$ be a $\vee$-closed subset of $\mathcal{L}$. If $\mathcal{P} \vee \mathcal{Q}$ is an $S$-$\mathcal{Q}$-filter of $\mathcal{L}$, then there exists an $s \in S$ such that $s \vee \mathcal{P} \subseteq \mathcal{Q}$ or $\mathcal{Q}$ is an $S$-prime filter of $\mathcal{L}$ and $s \vee \mathcal{Q} \subseteq \mathcal{P} \vee \mathcal{Q}$.
\end{theorem}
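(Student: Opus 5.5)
We mimic the proof of Theorem \ref{5}, now carrying the element $s\in S$ with which $\mathcal{P}\vee\mathcal{Q}$ is associated. So assume $\mathcal{P}\vee\mathcal{Q}$ is an $S$-$\mathcal{Q}$-filter associated with $s\in S$. By definition, for all $u,v\in\mathcal{L}$, if $u\vee v\in\mathcal{P}\vee\mathcal{Q}$ then $s\vee u\in\mathcal{P}\vee\mathcal{Q}$ or $s\vee v\in\mathcal{Q}$. We use this same $s$ in both alternatives of the conclusion. We also recall that the definition requires $\mathcal{P}\vee\mathcal{Q}\subseteq\mathcal{Q}$ and $S\cap\mathcal{Q}=\varnothing$.

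\emph{Case 1: $s\vee\mathcal{P}\subseteq\mathcal{Q}$.} Then we are done.

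\emph{Case 2: there is $u\in\mathcal{P}$ with $s\vee u\notin\mathcal{Q}$.} We first compare $\mathcal{Q}$ and $\mathcal{P}\vee\mathcal{Q}$. For any $v\in\mathcal{Q}$ we have $u\vee v\in\mathcal{P}\vee\mathcal{Q}$. We apply the defining property with the roles arranged so that the $\mathcal{Q}$-alternative falls on $u$: writing $u\vee v=v\vee u$, either $s\vee v\in\mathcal{P}\vee\mathcal{Q}$ or $s\vee u\in\mathcal{Q}$. The second option is excluded, so $s\vee v\in\mathcal{P}\vee\mathcal{Q}$ for every $v\in\mathcal{Q}$, that is, $s\vee\mathcal{Q}\subseteq\mathcal{P}\vee\mathcal{Q}$.

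Next we show $\mathcal{Q}$ is $S$-prime. It is disjoint from $S$ by hypothesis, so we verify the condition with the same $s$. Let $a\vee b\in\mathcal{Q}$. We would like $a\vee b$ to lie in $\mathcal{P}\vee\mathcal{Q}$, but only $s\vee(a\vee b)\in\mathcal{P}\vee\mathcal{Q}$ is available from the previous step. So we apply the defining property to $(s\vee a)\vee b\in\mathcal{P}\vee\mathcal{Q}$. This gives $s\vee s\vee a=s\vee a\in\mathcal{P}\vee\mathcal{Q}\subseteq\mathcal{Q}$, or $s\vee b\in\mathcal{Q}$. Hence $s\vee a\in\mathcal{Q}$ or $s\vee b\in\mathcal{Q}$, which is exactly $S$-primeness of $\mathcal{Q}$ with witness $s$.

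The main obstacle is this last step. The naive route, applying the property directly to $a\vee b$, fails because $a\vee b$ need not belong to $\mathcal{P}\vee\mathcal{Q}$. The fix is to absorb $s$ into one of the joinands, using idempotence $s\vee s=s$ and the inclusion $s\vee\mathcal{Q}\subseteq\mathcal{P}\vee\mathcal{Q}$ obtained first. A second point to check is that we use $\mathcal{P}\vee\mathcal{Q}\subseteq\mathcal{Q}$, which is part of the definition of an $S$-$\mathcal{Q}$-filter, to move $s\vee a$ from $\mathcal{P}\vee\mathcal{Q}$ into $\mathcal{Q}$.
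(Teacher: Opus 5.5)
Your proof is correct and follows the paper's argument. You fix the associated $s$ and use an element $u\in\mathcal{P}$ with $s\vee u\notin\mathcal{Q}$ to get $s\vee\mathcal{Q}\subseteq\mathcal{P}\vee\mathcal{Q}$; you then absorb $s$ by idempotence to obtain $S$-primeness of $\mathcal{Q}$ with the same witness $s$. The only cosmetic difference is that the paper applies the defining property to $(s\vee x)\vee(s\vee y)$ rather than to your $(s\vee a)\vee b$, and this changes nothing.
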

\begin{proof}
Let $\mathcal{P} \vee \mathcal{Q}$ be an $S$-$\mathcal{Q}$-filter of $\mathcal{L}$ associated with $s$. Let $s \vee \mathcal{P} \nsubseteq \mathcal{Q}$. Then, we conclude that  $s \vee u \notin  \mathcal{Q}$ for some $u \in  \mathcal{P}$. Take any $v \in \mathcal{Q}$. Hence, we get $s \vee v \in \mathcal{P} \vee \mathcal{Q}$ as $\mathcal{P} \vee \mathcal{Q}$ is an $S$-$\mathcal{Q}$-filter of $\mathcal{L}$ associated with $s$, $u \vee v \in \mathcal{P} \vee \mathcal{Q}$ and $s \vee u \notin  \mathcal{Q}$.  This implies that $s \vee \mathcal{Q} \subseteq \mathcal{P} \vee \mathcal{Q}$. Now, assume that $x \vee y \in \mathcal{Q}$ for $x,y \in \mathcal{L}$. Therefore, we have $(s \vee x) \vee (s \vee y)= (s \vee s ) \vee (x \vee y) \in s \vee \mathcal{Q} \subseteq \mathcal{P} \vee \mathcal{Q}$ which mean $s \vee x=s \vee (s \vee x) \in \mathcal{P} \vee \mathcal{Q} \subseteq \mathcal{Q}$ or $s \vee y =s \vee (s \vee y) \in \mathcal{P} \vee \mathcal{Q} \subseteq \mathcal{Q}$ by the hypothesis. Thus, $\mathcal{Q}$ is an $S$-prime filter of $\mathcal{L}$.
\end{proof}
\begin{theorem}
Let $S \subseteq S^{\prime}$ be   $\vee$-closed subsets of $\mathcal{L}$ and $\mathcal{P} \subseteq \mathcal{Q}$ be filters of $\mathcal{L}$. If  $\mathcal{P} $ is an  $S^{\prime}$-$\mathcal{Q}$-filter  of $\mathcal{L}$ and  for every $r \in S^{\prime}$, there exists an $r^{\prime} \in S^{\prime}$ with $r \vee r^{\prime} \in S$, then $\mathcal{P} $ is an $S$-$\mathcal{Q}$-filter of $\mathcal{L}$.
\end{theorem}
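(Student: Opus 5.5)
The idea is to take the element $s' \in S'$ associated with $\mathcal{P}$ as an $S'$-$\mathcal{Q}$-filter, use the hypothesis to push it into $S$, and then check that the resulting element still witnesses the defining condition. This is the lattice analogue of the standard ring-theoretic argument for $S$-versions of ideals when $S'$ is "saturated over" $S$.

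First I would dispose of the disjointness condition. Since $S \subseteq S'$ and $S' \cap \mathcal{Q} = \varnothing$ (this is built into $\mathcal{P}$ being an $S'$-$\mathcal{Q}$-filter), we get $S \cap \mathcal{Q} = \varnothing$. Together with $\mathcal{P} \subseteq \mathcal{Q}$, this means the definition of an $S$-$\mathcal{Q}$-filter applies.

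Next, let $\mathcal{P}$ be associated with $s' \in S'$. By hypothesis there is $r' \in S'$ with $t := s' \vee r' \in S$, and I claim that $\mathcal{P}$ is an $S$-$\mathcal{Q}$-filter associated with $t$. Let $u \vee v \in \mathcal{P}$. The $S'$-$\mathcal{Q}$-property gives $s' \vee u \in \mathcal{P}$ or $s' \vee v \in \mathcal{Q}$.
\begin{itemize}
\item In the first case, $s' \vee u \le r' \vee s' \vee u = t \vee u$. Since filters are upward closed, $t \vee u \in \mathcal{P}$.
\item In the second case, $s' \vee v \le t \vee v$, so $t \vee v \in \mathcal{Q}$ by the same upward closure.
\end{itemize}
Hence for all $u,v$ with $u \vee v \in \mathcal{P}$, either $t \vee u \in \mathcal{P}$ or $t \vee v \in \mathcal{Q}$, which is exactly the required condition.

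There is no real obstacle here. The only step needing care is to use upward closure of filters: joining with a further element can only move an element up, so membership in a filter is preserved. This plays the role that "multiplying by an extra element keeps you in the ideal" plays in the ring case. I also need to remember to verify $S \cap \mathcal{Q} = \varnothing$, which comes for free from $S \subseteq S'$.
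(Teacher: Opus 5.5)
Your proof is correct and is the paper's argument: take the element $r \in S'$ associated with $\mathcal{P}$, replace it by $r \vee r' \in S$, and carry the dichotomy over. You also make explicit two steps the paper leaves implicit, namely $S \cap \mathcal{Q} = \varnothing$ (from $S \subseteq S'$) and the upward closure of $\mathcal{P}$ and $\mathcal{Q}$ needed to pass from $r \vee u$ to $(r \vee r') \vee u$.
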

\begin{proof}
Assume that $\mathcal{P} $ is an  $S^{\prime}$-$\mathcal{Q}$-filter  of $\mathcal{L}$ associated with $r$. Let $u \vee v \in \mathcal{P} $ for $u,v \in  \mathcal{L}$. Then, we conclude that $r \vee u \in \mathcal{P} $ or $r \vee v \in \mathcal{Q} $. By the hypothesis, we have $s=r \vee r^{\prime} \in S$ for some $r^{\prime} \in S^{\prime}$. Therefore, we obtain $s \vee u \in \mathcal{P} $ or $s \vee v \in \mathcal{Q} $ which means $\mathcal{P} $ is an $S$-$\mathcal{Q}$-filter of $\mathcal{L}$.
\end{proof}
Assume that  $(\mathcal{L}_1,\le_1)$ and $(\mathcal{L}_2,\le_2)$ are two lattices. Consider the partial order $\le_c$ on $\mathcal{L}_1 \times  \mathcal{L}_2$ defined as follows: $(u_1,u_2) \le_c (v_1,v_2)$ for all $(u_1, u_2), (v_1, v_2) \in \mathcal{L}_1  \times \mathcal{L}_2$ if and only if $u_1 \le_1 v_1$ and $u_2 \le_2 v_2$. In this case, $(\mathcal{L}_1 \times  \mathcal{L}_2,\le_c)$ is a lattice such that $(u_1,u_2) \vee_c (v_1,v_2)=(u_1 \vee v_1,u_2 \vee v_2)$ and $(u_1,u_2) \wedge_c (v_1, v_2)=(u_1 \wedge v_1, u_2 \wedge v_2)$ for every $(u_1, u_2), (v_1, v_2) \in \mathcal{L}_1 \times   \mathcal{L}_2$. The lattice  $\mathcal{L}=\mathcal{L}_1  \times \mathcal{L}_2$ is called a {\it decomposable lattice} \cite{Atani2}.

\begin{theorem} \label{car}
Let $S_1$ and $S_2$ be $\vee$-closed subsets of  the lattices $\mathcal{L}_1$ and $\mathcal{L}_2$, respectively. Let $\mathcal{P}_1 \subseteq \mathcal{Q}_1$ and  $\mathcal{P}_2 \subseteq \mathcal{Q}_2$ be proper filters of $\mathcal{L}_1$ and $\mathcal{L}_2$, respectively. If   $\mathcal{P}_1$ is an $S_1$-$\mathcal{Q}_1$-filter of $\mathcal{L}_1$ and $\mathcal{Q}_2 \cap S_2  \neq \varnothing$ or $\mathcal{P}_2$ is an $S_2$-$\mathcal{Q}_2$-filter of $\mathcal{L}_2$ and $\mathcal{Q}_1 \cap S_1 \neq \varnothing$, then $\mathcal{P}_1 \times \mathcal{P}_2$ is an $S$-$\mathcal{Q}$-filter of $\mathcal{L}_1 \times \mathcal{L}_2$ where $S= S_1 \times S_2$, $\mathcal{Q}=\mathcal{Q}_1 \times \mathcal{Q}_2$.
\end{theorem}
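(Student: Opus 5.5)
The plan is a direct element-wise verification. By symmetry I treat only the first alternative: $\mathcal{P}_1$ is an $S_1$-$\mathcal{Q}_1$-filter of $\mathcal{L}_1$ and $\mathcal{Q}_2 \cap S_2 \neq \varnothing$.

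\emph{Step 1: admissibility of $\mathcal{P}_1 \times \mathcal{P}_2$.} First I check that $\mathcal{P}_1\times\mathcal{P}_2 \subseteq \mathcal{Q}$ are filters of $\mathcal{L}_1\times\mathcal{L}_2$; this is immediate componentwise. Next I need $S=S_1\times S_2$ to be $\vee$-closed, which holds because joins and $0=(0,0)$ are componentwise. Finally I need $S\cap\mathcal{Q}=\varnothing$. Since $(S_1\times S_2)\cap(\mathcal{Q}_1\times\mathcal{Q}_2)=(S_1\cap\mathcal{Q}_1)\times(S_2\cap\mathcal{Q}_2)$, and the definition of an $S_1$-$\mathcal{Q}_1$-filter already gives $S_1\cap\mathcal{Q}_1=\varnothing$, this product is empty.

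\emph{Step 2: choice of $s$ and the case split.} Let $s_1\in S_1$ be the element with which $\mathcal{P}_1$ is associated, pick $s_2\in S_2\cap\mathcal{Q}_2$, and put $s=(s_1,s_2)\in S$. Take $(u_1,u_2)\vee_c(v_1,v_2)=(u_1\vee v_1,u_2\vee v_2)\in\mathcal{P}_1\times\mathcal{P}_2$. Then $u_1\vee v_1\in\mathcal{P}_1$, so either $s_1\vee u_1\in\mathcal{P}_1$ or $s_1\vee v_1\in\mathcal{Q}_1$.

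In the second case I use that $\mathcal{Q}_2$ is upward closed. Since $s_2\in\mathcal{Q}_2$ and $s_2\le s_2\vee v_2$, we get $s_2\vee v_2\in\mathcal{Q}_2$. Hence $s\vee(v_1,v_2)\in\mathcal{Q}_1\times\mathcal{Q}_2=\mathcal{Q}$, as required.

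\emph{Main obstacle: the first case.} Here $s_1\vee u_1\in\mathcal{P}_1$, and I need $s_2\vee u_2\in\mathcal{P}_2$, or else land in the second alternative. Nothing in the hypotheses forces $s_2\vee u_2\in\mathcal{P}_2$. We only know $u_2\vee v_2\in\mathcal{P}_2$ and $s_2\in\mathcal{Q}_2$, and $\mathcal{P}_2$ may be strictly smaller than $\mathcal{Q}_2$. We also cannot switch to the $\mathcal{Q}$-disjunct, since $s_1\vee v_1$ need not lie in $\mathcal{Q}_1$.

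I would first try to exploit freedom in choosing $s_2$, for instance by taking it large in $\mathcal{Q}_2\cap S_2$. That still only yields membership in $\mathcal{Q}_2$, not in $\mathcal{P}_2$. The natural repair is to strengthen the hypothesis to $\mathcal{P}_2\cap S_2\neq\varnothing$ (which implies $\mathcal{Q}_2\cap S_2\neq\varnothing$) and choose $s_2\in\mathcal{P}_2\cap S_2$. Then $s_2\vee u_2\in\mathcal{P}_2$ and $s_2\vee v_2\in\mathcal{Q}_2$ both hold by upward closure, and both cases close.

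Alternatively, one can keep the stated hypothesis and weaken the conclusion so that the first component in the first case only requires membership in $\mathcal{Q}$. So, as stated, I expect to be able to complete the argument only after adjusting the hypothesis in this way. Before writing it up I would test a small product, such as $\mathcal{L}_2$ a chain $0<a<1$ with $\mathcal{P}_2=\{1\}$, $\mathcal{Q}_2=\{a,1\}$, $S_2=\{0,a\}$, to decide whether the statement genuinely needs this correction.
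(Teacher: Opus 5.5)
Your analysis is correct, and the obstacle you isolate in the first case is real. The statement is false as written, so no choice of $s_2$ can rescue it. The paper's proof makes exactly your choices: $s_1$ associated with $\mathcal{P}_1$, and $s_2\in\mathcal{Q}_2\cap S_2$. It then concludes only that $(s_1\vee u_1,s_2\vee u_2)\in\mathcal{Q}_1\times\mathcal{Q}_2$ or $(s_1\vee v_1,s_2\vee v_2)\in\mathcal{Q}_1\times\mathcal{Q}_2$. That is precisely the weakened conclusion you describe. It never establishes what the definition requires in the first disjunct, namely $s\vee u\in\mathcal{P}_1\times\mathcal{P}_2$.

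Your proposed test case settles the question. Pair it with $\mathcal{L}_1=\{0,1\}$, $\mathcal{P}_1=\mathcal{Q}_1=\{1\}$ and $S_1=\{0\}$. Here $\mathcal{P}_1$ is prime, hence an $S_1$-$\mathcal{Q}_1$-filter associated with $0$. Take $\mathcal{L}_2$ the chain $0<a<1$ with $\mathcal{P}_2=\{1\}$, $\mathcal{Q}_2=\{a,1\}$ and $S_2=\{0,a\}$, so $\mathcal{Q}_2\cap S_2\neq\varnothing$.

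Now let $u=(1,0)$ and $v=(0,1)$. Then $u\vee v=(1,1)\in\mathcal{P}_1\times\mathcal{P}_2$. Yet for every $s=(0,t)\in S$ with $t\in\{0,a\}$:
\begin{itemize}
\item $s\vee u=(1,t)\notin\mathcal{P}_1\times\mathcal{P}_2$;
\item $s\vee v=(0,1)\notin\mathcal{Q}$.
\end{itemize}

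The same pair shows more in general. Suppose $\mathcal{P}_1\times\mathcal{P}_2$ is an $S$-$\mathcal{Q}$-filter associated with $(s_1,s_2)$, and $S_1\cap\mathcal{Q}_1=\varnothing$. Then $s\vee v=(s_1,1)$ cannot lie in $\mathcal{Q}$, so $s\vee u=(1,s_2)$ must lie in $\mathcal{P}_1\times\mathcal{P}_2$, which forces $s_2\in\mathcal{P}_2$.

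So your strengthened hypothesis $\mathcal{P}_2\cap S_2\neq\varnothing$ is necessary as well as sufficient. With it, your argument is a complete proof: Step 1 together with upward closure of $\mathcal{P}_2\subseteq\mathcal{Q}_2$ in both cases. The symmetric alternative is corrected the same way. Write up that corrected version rather than the original statement.
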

\begin{proof}
Let $\mathcal{P}_1$ be an $S_1$-$\mathcal{Q}_1$-filter of $\mathcal{L}_1$ associated with $s_1$ and $\mathcal{Q}_2 \cap S_2  \neq \varnothing$. This implies that   $  (\mathcal{Q}_1 \times \mathcal{Q}_2) \cap (S_1 \times  S_2) =\varnothing$.  Assume that $(u_1 \vee v_1,u_2 \vee v_2)=(u_1,u_2) \vee_c (v_1 , v_2) \in \mathcal{P}_1 \times \mathcal{P}_2$ for $(u_1,u_2),  (v_1 , v_2) \in \mathcal{L}_1 \times \mathcal{L}_2$. From $u_1 \vee v_1 \in \mathcal{P}_1$ it follows that  $s_1 \vee u_1 \in \mathcal{P}_1$ or $s_1 \vee v_1 \in \mathcal{Q}_1$. Since $\mathcal{Q}_2 \cap S_2 \neq \varnothing$, there exists $s_2 \in \mathcal{Q}_2 \cap S_2$. Therefore, we obtain $(s_1,s_2) \vee_c (u_1,u_2)=(s_1 \vee u_1,s_2 \vee u_2) \in \mathcal{Q}_1 \times \mathcal{Q}_2$ or $(s_1,s_2) \vee_c (v_1,v_2)=(s_1 \vee v_1,s_2 \vee v_2) \in \mathcal{Q}_1 \times \mathcal{Q}_2$. Consequently, $\mathcal{P}_1 \times \mathcal{P}_2$ is an $S$-$\mathcal{Q}$-filter of $\mathcal{L}_1 \times \mathcal{L}_2$. By a similar argument, we conclude that if $\mathcal{P}_2$ is an $S_2$-$\mathcal{Q}_2$-filter of $\mathcal{L}_2$ and $\mathcal{Q}_1 \cap S_1 \neq \varnothing$, then $\mathcal{P}_1 \times \mathcal{P}_2$ is an $S$-$\mathcal{Q}$-filter of $\mathcal{L}_1 \times \mathcal{L}_2$.

\end{proof}
\section{Almost $S$-$\mathcal{Q}$-filters}
\begin{definition}
Let $S$ be a $\vee$-closed subset of $\mathcal{L}$ and let $\mathcal{P} \subseteq \mathcal{Q}$ be filters of $\mathcal{L}$ with $S \cap \mathcal{P}=\varnothing$. We say that $\mathcal{P}$ is   an {\it almost $S$-$\mathcal{Q}$-filter} of  $\mathcal{L}$ if  for all $u,v \in \mathcal{L}$,  $u \vee v \in \mathcal{P}$ implies that there exists an element $s \in S$ such that $s \vee u \in \mathcal{P}$ or $s \vee v \in \mathcal{Q}$. 
\end{definition}
Assume that $S$ is a $\vee$-closed subset of $\mathcal{L}$. A filter $\mathcal{P}$ of $\mathcal{L}$  is called $S$-finite if $s \vee \mathcal{P} \subseteq \mathcal{G} \subseteq \mathcal{P}$ for some finitely generated filter $\mathcal{G}$ of $\mathcal{L}$ and $s \in S$.
\begin{theorem} \label{31}
Let $S$ be a $\vee$-closed subset of $\mathcal{L}$ and  $\mathcal{P} \subseteq \mathcal{Q}$ be filters of $\mathcal{L}$ such that  $S \cap \mathcal{P}=\varnothing$. Then,  $\mathcal{P}$ is an  almost $S$-$\mathcal{Q}$-filter of $\mathcal{L}$ if and only if  
 for all $S$-finite filters $\mathcal{I}$ and $\mathcal{J}$  of $\mathcal{L}$, $\mathcal{I} \vee \mathcal{J} \subseteq \mathcal{P}$ implies that there exists an $s \in S$ such that either  $s \vee \mathcal{I}  \subseteq \mathcal{P} $ or $s \vee \mathcal{J} \subseteq \mathcal{Q}$.
\end{theorem}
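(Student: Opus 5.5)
The plan is to prove both directions directly from the definitions. The forward direction reduces each $S$-finite filter to a principal filter generated by a single element, and the converse uses the principal filters $T(\{u\})$, $T(\{v\})$ as in the proof of Theorem \ref{22}.

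\emph{($\Longrightarrow$).} Let $\mathcal{P}$ be an almost $S$-$\mathcal{Q}$-filter, and let $\mathcal{I},\mathcal{J}$ be $S$-finite filters with $\mathcal{I}\vee\mathcal{J}\subseteq\mathcal{P}$.
\begin{enumerate}
\item By $S$-finiteness there are $s_1,s_2\in S$ and finitely generated filters $\mathcal{G}_1=T(\{a_1,\dots,a_n\})$ and $\mathcal{G}_2=T(\{b_1,\dots,b_m\})$ with $s_1\vee\mathcal{I}\subseteq\mathcal{G}_1\subseteq\mathcal{I}$ and $s_2\vee\mathcal{J}\subseteq\mathcal{G}_2\subseteq\mathcal{J}$.
\item Put $a=a_1\wedge\dots\wedge a_n$ and $b=b_1\wedge\dots\wedge b_m$. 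By the lemma describing $T(X)$, we have $\mathcal{G}_1=\{x\mid a\le x\}$ and $\mathcal{G}_2=\{y\mid b\le y\}$. Since filters are closed under finite meets, $a\in\mathcal{I}$ and $b\in\mathcal{J}$.
\item Hence $a\vee b\in\mathcal{I}\vee\mathcal{J}\subseteq\mathcal{P}$. This holds because any element of the form $i\vee j$ with $i\in\mathcal{I}$, $j\in\mathcal{J}$ lies in $\mathcal{I}\vee\mathcal{J}$, under either reading of the join of filters.
\item Applying the almost $S$-$\mathcal{Q}$ property to $a\vee b$ gives $s\in S$ with $s\vee a\in\mathcal{P}$ or $s\vee b\in\mathcal{Q}$.
\item Set $t=s\vee s_1\vee s_2$, which lies in $S$ because $S$ is $\vee$-closed.
\item In the first case, take any $x\in\mathcal{I}$. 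Then $s_1\vee x\in\mathcal{G}_1$, so $a\le s_1\vee x$. Therefore $s\vee a\le t\vee x$, and upward closure of $\mathcal{P}$ gives $t\vee x\in\mathcal{P}$. Thus $t\vee\mathcal{I}\subseteq\mathcal{P}$.
\item In the second case, the same argument with $b\le s_2\vee y$ for $y\in\mathcal{J}$ gives $t\vee\mathcal{J}\subseteq\mathcal{Q}$.
\end{enumerate}
The single element $t$ therefore works in both cases.

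\emph{($\Longleftarrow$).} Suppose $u\vee v\in\mathcal{P}$, and put $\mathcal{I}=T(\{u\})$ and $\mathcal{J}=T(\{v\})$.
\begin{enumerate}
\item Both filters are finitely generated. Since $0\in S$ and $0\vee\mathcal{I}=\mathcal{I}$, each is $S$-finite with witness $s=0$.
\item Every element of $\mathcal{I}\vee\mathcal{J}$ lies above $u\vee v$, so $\mathcal{I}\vee\mathcal{J}\subseteq\mathcal{P}$ by upward closure.
\item The hypothesis yields $s\in S$ with $s\vee\mathcal{I}\subseteq\mathcal{P}$ or $s\vee\mathcal{J}\subseteq\mathcal{Q}$. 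In particular $s\vee u\in\mathcal{P}$ or $s\vee v\in\mathcal{Q}$, which is exactly the defining condition.
\end{enumerate}

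The main obstacle is the forward direction's passage from an arbitrary $S$-finite filter to a single generating element. This step needs the explicit description of $T(X)$ to replace $\mathcal{G}_1$ by the principal filter generated by $a$. It also needs care in combining the three elements $s,s_1,s_2$ into one witness $t$, using $\vee$-closedness of $S$ and upward closure of $\mathcal{P}$ and $\mathcal{Q}$. I would also check that the argument does not depend on which definition of $\mathcal{I}\vee\mathcal{J}$ is intended. It only uses that $\mathcal{I}\vee\mathcal{J}$ contains every $i\vee j$ with $i\in\mathcal{I}$, $j\in\mathcal{J}$, and that every one of its elements is above some such $i\vee j$.
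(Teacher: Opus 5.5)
Your proof is correct. The reverse implication is the paper's own argument; you also check, via $0\in S$, that $T(\{u\})$ and $T(\{v\})$ are $S$-finite, which the paper leaves implicit. The forward direction, however, takes a different route.

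The paper works generator by generator, following the ring-theoretic proof for $S$-finite ideals:
\begin{itemize}
\item[\rm(a)] it assumes $s\vee\mathcal{J}\nsubseteq\mathcal{Q}$ for every $s\in S$ and selects a generator $v_j$ with $s\vee v_j\notin\mathcal{Q}$ for all $s$;
\item[\rm(b)] it applies the almost $S$-$\mathcal{Q}$ condition to each $u_i\vee v_j$ to get witnesses $t_i$, and sets $t=t_1\vee\dots\vee t_n$;
\item[\rm(c)] it concludes $t\vee T(\{u_1,\dots,u_n\})\subseteq\mathcal{P}$.
\end{itemize}
You instead use the description of $T(X)$ to see that every finitely generated filter is principal, $T(\{a_1,\dots,a_n\})=\{x\mid x\ge a_1\wedge\dots\wedge a_n\}$. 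Then a single application of the definition to $a\vee b$, with witness $t=s\vee s_1\vee s_2$, finishes the job.

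Your argument is shorter, and it uses only that $\mathcal{P}$ and $\mathcal{Q}$ are upward closed, so it holds in every bounded lattice. The paper's passage between generators and the filter they generate is weaker. Both choosing one index $j$ that works for all $s$, and deducing $t\vee\bigwedge_i u_i\in\mathcal{P}$ from $t\vee u_i\in\mathcal{P}$ for all $i$, tacitly need $\bigwedge_i(t\vee u_i)=t\vee\bigwedge_i u_i$. That is distributivity, which fails for example in $M_3$ and is not among the paper's standing assumptions.

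One correction to your closing remark. The property that every element of $\mathcal{I}\vee\mathcal{J}$ lies above some $i\vee j$ holds for the elementwise join $\{i\vee j\mid i\in\mathcal{I},\ j\in\mathcal{J}\}$. That is the reading the paper uses; see the proofs of Theorems \ref{2} and \ref{5}. It does not hold for the join $T(\mathcal{I}\cup\mathcal{J})$ in the lattice of filters. Under that second reading the right-hand condition is satisfied trivially with $s=0$, so the reverse implication would be false. Your proof is therefore not independent of the reading, but it is correct for the intended, elementwise one.
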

\begin{proof}
$\Longrightarrow$ Let $\mathcal{P}$ be an  almost $S$-$\mathcal{Q}$-filter of $\mathcal{L}$. Assume that $\mathcal{I}$ and $\mathcal{J}$ are $S$-finite filters of $\mathcal{L}$ satisfying $\mathcal{I} \vee \mathcal{J} \subseteq \mathcal{P}$ but $s \vee \mathcal{J} \nsubseteq \mathcal{Q}$ for all $s \in S$. Then, we conclude that $s_1 \vee \mathcal{I} \subseteq T(\{u_1,\ldots,u_n\})$ and $s_2 \vee \mathcal{J} \subseteq T(\{v_1,\ldots,v_m\})$ for some $s_1,s_2 \in S$, $u_1,\ldots,u_n \in \mathcal{I}$ and $v_1,\ldots,v_m \in \mathcal{J}$. If $s \vee T(v_1,\ldots,v_m\}) \subseteq \mathcal{Q}$ for some $s \in S$, then we obtain $(s \vee  s_2) \vee \mathcal{J} \subseteq s \vee T(\{v_1,\ldots,v_m\}) \subseteq \mathcal{Q}$, a contradiction.
 Hence, we get $s \vee T(v_1,\ldots,v_m\}) \nsubseteq \mathcal{Q}$ for all $s \in S$. This implies that $s \vee v_j \notin \mathcal{Q}$ for some $1 \leq j \leq m$. Assume that $1 \leq i \leq n$. Therefor, we have $u_i \vee v_j \in \mathcal{P}$ but $s \vee v_j \notin \mathcal{Q}$ for all $s \in S$.   Since $\mathcal{P}$ is  an  almost $S$-$\mathcal{Q}$-filter of $\mathcal{L}$, there exists $t_i \in S$ with $t_i \vee u_i  \in  \mathcal{P}$ for each $1 \leq i \leq n$. Put $t=t_1 \vee \ldots \vee t_n$. So, we conclude that $(t \vee s_1)\vee \mathcal{I} \subseteq t \vee T(\{u_1,\ldots,u_n\}) \subseteq \mathcal{P}$. Set $s=t \vee s_1$. Consequently, $s \vee \mathcal{I} \subseteq \mathcal{P}$.
 
 $\Longleftarrow$ Suppose that  $u \vee v \in \mathcal{P}$ for $u,v \in \mathcal{L}$. Set $\mathcal{I}=T(\{u\})$ and $\mathcal{J}=T(\{v\})$.  Since  $\mathcal{I} \vee \mathcal{J} \subseteq \mathcal{P}$ it follows that there exists an $s \in S$ such that  $s \vee \mathcal{I} \subseteq \mathcal{P}$ or  $s \vee \mathcal{J} \subseteq \mathcal{Q}$. This means that,  $s \vee u \in \mathcal{P}$ or  $s \vee v \in \mathcal{Q}$. Thus, $\mathcal{P}$ is an  almost $S$-$\mathcal{Q}$-filter of $\mathcal{L}$.
\end{proof}
\begin{theorem}
Let $S$ be a $\vee$-closed subset of $\mathcal{L}$ and let $\mathcal{I} \subseteq \mathcal{P} \subseteq \mathcal{Q}$  be    filters of $\mathcal{L}$ such that $S \cap \mathcal{P}=\varnothing$. Then, $ \mathcal{P}$ is an almost   $S$-$\mathcal{Q}$-filter of $\mathcal{L}$ if and only if  $\mathcal{P}/ \mathcal{I}$ is an almost $\bar{S}$-$\mathcal{Q}/ \mathcal{I}$-filter of $\mathcal{L}/ \mathcal{I}$ such that $\bar{S}=\{s \wedge \mathcal{I} \mid s \in S\}$.
\end{theorem}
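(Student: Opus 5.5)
We transfer the defining condition back and forth through the quotient map $u \mapsto u \wedge \mathcal{I}$, following the pattern of the earlier theorem on $\mathcal{P}/\mathcal{I}$ being a $\mathcal{Q}/\mathcal{I}$-filter.

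\emph{Preliminary facts.} Since $\mathcal{I} \subseteq \mathcal{P} \subseteq \mathcal{Q}$, for a filter $\mathcal{F} \in \{\mathcal{P},\mathcal{Q}\}$ we will use
\[ u \wedge \mathcal{I} \in \mathcal{F}/\mathcal{I} \iff u \in \mathcal{F}. \]
The direction $\Leftarrow$ is immediate. For $\Rightarrow$: if $u \wedge \mathcal{I} = f \wedge \mathcal{I}$ with $f \in \mathcal{F}$, then $u \wedge a = f \wedge b$ for some $a,b \in \mathcal{I}$. Hence $u \ge f \wedge b$, and $f \wedge b \in \mathcal{F}$ because $\mathcal{I} \subseteq \mathcal{F}$; so $u \in \mathcal{F}$ by upward closure. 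This is exactly where the hypothesis $\mathcal{I} \subseteq \mathcal{P}$ is used.

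We also check the standing hypotheses on the quotient side. First, $\bar S$ is $\vee$-closed: $(s_1 \wedge \mathcal{I}) \vee_Q (s_2 \wedge \mathcal{I}) = (s_1 \vee s_2)\wedge \mathcal{I}$, and $0 \wedge \mathcal{I}$ is the least element. Second, $\bar S \cap \mathcal{P}/\mathcal{I} = \varnothing$: by the fact above, $s \wedge \mathcal{I} \in \mathcal{P}/\mathcal{I}$ would force $s \in \mathcal{P}$, contradicting $S \cap \mathcal{P}=\varnothing$. Third, $\mathcal{P}/\mathcal{I} \subseteq \mathcal{Q}/\mathcal{I}$ is clear.

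\emph{($\Rightarrow$)} Take classes $u \wedge \mathcal{I}, v \wedge \mathcal{I}$ with join $(u \vee v)\wedge \mathcal{I} \in \mathcal{P}/\mathcal{I}$. By the preliminary fact, $u \vee v \in \mathcal{P}$. The almost $S$-$\mathcal{Q}$ property gives $s \in S$ with $s \vee u \in \mathcal{P}$ or $s \vee v \in \mathcal{Q}$. Passing to classes, $(s \wedge \mathcal{I}) \vee_Q (u \wedge \mathcal{I}) = (s \vee u)\wedge \mathcal{I} \in \mathcal{P}/\mathcal{I}$, or the analogous statement holds in $\mathcal{Q}/\mathcal{I}$.

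\emph{($\Leftarrow$)} Take $u \vee v \in \mathcal{P}$. Then $(u\wedge\mathcal{I}) \vee_Q (v \wedge \mathcal{I}) \in \mathcal{P}/\mathcal{I}$, so some $\bar s = s \wedge \mathcal{I}$ with $s \in S$ satisfies $(s \vee u)\wedge \mathcal{I} \in \mathcal{P}/\mathcal{I}$ or $(s\vee v)\wedge\mathcal{I} \in \mathcal{Q}/\mathcal{I}$. The preliminary fact then gives $s \vee u \in \mathcal{P}$ or $s \vee v \in \mathcal{Q}$.

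\emph{Main obstacle.} The only nontrivial point is the membership equivalence for classes. The paper states it loosely as ``$u\wedge\mathcal{I}=\mathcal{I}$ iff $u\in\mathcal{P}$'', and we must apply it with $\mathcal{P}$ and $\mathcal{Q}$ in place of $\mathcal{I}$. We prove it directly via upward closure, as above. Everything else is bookkeeping with the formula $\vee_Q$.
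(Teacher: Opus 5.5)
Your proposal is correct and follows the paper's proof essentially line for line: in both directions you translate $u\vee v\in\mathcal{P}$ into $(u\wedge\mathcal{I})\vee_Q(v\wedge\mathcal{I})\in\mathcal{P}/\mathcal{I}$, and you transport the conclusion ``$s\vee u\in\mathcal{P}$ or $s\vee v\in\mathcal{Q}$'' through the formula $(s\wedge\mathcal{I})\vee_Q(x\wedge\mathcal{I})=(s\vee x)\wedge\mathcal{I}$. Beyond the paper, you prove the membership equivalence $u\wedge\mathcal{I}\in\mathcal{F}/\mathcal{I}\iff u\in\mathcal{F}$ (using $\mathcal{I}\subseteq\mathcal{F}$) and check that $\bar{S}$ is $\vee$-closed and disjoint from $\mathcal{P}/\mathcal{I}$, which the paper uses silently; you also avoid the paper's typo ``$s\vee v\in\mathcal{P}$'' where $s\vee u\in\mathcal{P}$ is meant.
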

\begin{proof}
$\Longrightarrow$ Let $ \mathcal{P}$ be an almost $S$-$\mathcal{Q}$-filter of $\mathcal{L}$. Assume  that $(u \vee v) \wedge \mathcal{I}=(u \wedge \mathcal{I})\vee_Q (v \wedge \mathcal{I})  \in \mathcal{P}/ \mathcal{I}$ for $u \wedge \mathcal{I} , v \wedge \mathcal{I} \in \mathcal{L}/ \mathcal{I}$ .  This means that  $u \vee v \in  \mathcal{P}$. Therefore, there exists some $s \in S$ such that   $s \vee v \in \mathcal{P}$ or $s \vee v \in \mathcal{Q}$ as $ \mathcal{P}$ is  an almost $S$-$\mathcal{Q}$-filter of $\mathcal{L}$. It yields that  $(s \vee u)\wedge \mathcal{I}=(s \wedge \mathcal{I})\vee_Q (u \wedge \mathcal{I}) \in \mathcal{P}/ \mathcal{I}$ or $(s \vee v)\wedge \mathcal{I}=(s \wedge \mathcal{I})\vee_Q (v \wedge \mathcal{I}) \in \mathcal{Q}/ \mathcal{I}$. Thus, $\mathcal{P}/ \mathcal{I}$ is an almost $\bar{S}$-$\mathcal{Q}/ \mathcal{I}$-filter of $\mathcal{L}/ \mathcal{I}$ 

$\Longleftarrow$ Assume that $\mathcal{P}/ \mathcal{I}$ is an almost  $\mathcal{Q}/ \mathcal{I}$-filter of $\mathcal{L}/ \mathcal{I}$ and $u \vee v \in \mathcal{P}$. This means that  $(u \vee v)  \wedge \mathcal{I} =(u  \wedge \mathcal{I})  \vee_Q (v  \wedge \mathcal{I}) \in \mathcal{P}/ \mathcal{I}$. Therefore,    there exists $s  \wedge \mathcal{I} \in \bar{S}$ such that $(s \wedge \mathcal{I}) \vee_Q (u\wedge \mathcal{I})=(s \vee u)\wedge \mathcal{I} \in \mathcal{P}/ \mathcal{I}$ or $(s \wedge \mathcal{I}) \vee_Q (v\wedge \mathcal{I})=(s \vee v)\wedge \mathcal{I} \in \mathcal{Q}/ \mathcal{I}$ as $\mathcal{P}/ \mathcal{I}$ is an almost $\bar{S}$-$\mathcal{Q}/ \mathcal{I}$-filter of $\mathcal{L}/ \mathcal{I}$. Hence,  we have  $s \vee u \in \mathcal{P}$ or $s \vee v \in \mathcal{Q}$. Consequently, $ \mathcal{P}$ is an  almost $S$-$\mathcal{Q}$-filter of $\mathcal{L}$.
\end{proof}


\end{document}